\documentclass[11pt]{article}

\usepackage{amsfonts,amssymb,amsmath,amsthm}
\usepackage{subfigure, psfrag, dsfont}
\usepackage{hyperref, fullpage, color, upgreek}
\usepackage{graphicx}

\newcommand{\Rbb}{\mathbb{R}}
\newcommand{\scp}[2]{\langle #1, #2 \rangle}


\newtheorem{lemma}{Lemma}

\newcommand{\inv}[1]{\frac{1}{#1}}

\newcommand{\tinv}[1]{{\textstyle\frac{1}{#1}}}

\newcommand{\ud}{\mathrm{d}} 

\newcommand{\Mcal}{\mathcal{M}}

\newcommand{\norm}[1]{\|#1\|}

\DeclareMathOperator{\Id}{Id}
\DeclareMathOperator{\vol}{vol}

\title{Convergence Rate of the\\ Symmetrically Normalized Graph
Laplacian\\[5mm]
{\small TR-LJ-2011.01}}
\author{L. Jacques}
\date{\today}

\begin{document}

\maketitle

\begin{abstract}
  This short note aims at (re)proving that the symmetrically
  normalized graph Laplacian $L=\Id - D^{-1/2}WD^{-1/2}$ (from a graph
  defined from a Gaussian weighting kernel on a sampled smooth manifold)
  converges towards the continuous Manifold Laplacian when the
  sampling become infinitely dense. The convergence rate with
  respect to the number of samples $N$ is $O(1/N)$. 
\end{abstract}

There exist discrete operators which are the equivalent of the
gradient and the divergence operators defined on continuous
manifold. They share with them some common properties and they
converge also to their continuous counterparts for a sufficiently fine
sampling of the underlying manifold.

The first one relies on the definition of edge derivative
\cite{zhou2004rfl, peyre2008nlr}. For a smooth function $f: V\to
\Rbb$, the edge derivative of $f$ on $u\in V$ along the edge
$e=(u,v)\in E$ formed by the connected vertices $u$ and $v\in V$ reads
$$
\nabla_e f\big(u)\ =\ \sqrt{\tfrac{w(u,v)}{2d(v)}}
f(v) - \sqrt{\tfrac{w(u,v)}{2d(u)}} f(u), 
$$
where $d(u) = \sum_{v\in V} w(u,v)$ is the \emph{degree} of the vertex
$u$. From this relation, we have obviously $\nabla_e f\big(u) = -\nabla_e
f\big(v)$. 

The gradient of $f$ is then defined globally as the vector field
$\nabla f: E\to \Rbb$ defined on the edge set $E$ as $\nabla f(e) =
\nabla_{e} f(u)$ for $e=(u,v)\in E$. This gradient can be represented
as a linear operator $\nabla\in\Rbb^{N\times N}$, so that 
$$
\nabla f(u)\ =\ \{\nabla f(u,v):\ v\in V\}\ \in\ \Rbb^N,
$$ 
corresponds to the gradient of $f$ on $u\in V$ seen as a vector of
$\Rbb^N$. The norm of this object on each $u\in V$ is defined
naturally as
$$ 
\norm{\nabla f(u)}^2\ =\ \sum_{v\in V} |\nabla f(u,v)|^2\ =\
\sum_{v\,\sim\,u} |\nabla f(u,v)|^2.  
$$

The scalar products $\scp{f}{g}=\sum_{u\in V} f(u) g(v)$ and
$\scp{F}{G}=\sum_{e\in E} F(e) G(e)$ between two real functions $f$
and $g$ in the Hilbert space $\ell^2(V)=\{h: \sum_{u\in V} |h(u)|^2 <
\infty\}$ and two vector fields $F$ and $G$ in $\ell^2(E)=\{H:
\sum_{e\in E}
|H(e)|^2 < \infty\}$, defined then the adjoint of the gradient,
i.e. the graph divergence $\nabla^*$. Indeed, applying the relation
$\scp{\nabla g}{F}=\scp{g}{-\nabla^* F}$ valid in the continuous
domain \cite{chambolle2008}, we get
$$
[\nabla^* F](u) \ =\ \sum_{v\, \sim\, u} \sqrt{\tfrac{w(u,v)}{2d(u)}}
\big(F(u,v) - F(v,u)\big).
$$ 

The graph Laplacian of a function $f$ is then defined as 
$$
\Delta f(u)\ =\ \nabla^*\nabla f(u)\ =\ \sum_{v\,\sim\, u}
\tfrac{w(u,v)}{\sqrt{d(u)d(v)}} f(v) - f(u). 
$$ 
Using matrix notations, this operator corresponds actually to the
common (symmetric) normalized Laplacian defined on graph, i.e. 
$$
\Delta\ =\ D^{-1/2}WD^{-1/2}\ -\ \Id,
$$
with $D\in\Rbb^{N\times N}$ ($N = \# V$) a diagonal matrix such that
$D_{uu} = d(u)$ and $W\in\Rbb^{N\times N}$ the weight matrix with
$W_{uv} = w(u,v)$.

Interestingly, the graph Laplacian converges to the continuous
Laplace-Beltrami operator \cite{carmo1992rg} on the manifold
underlying the graph definition.

\begin{lemma}
  If the vertex set $V=\{v_1,\,\cdots,v_N\}\subset\,\Mcal$
  consists of $N$ points taken uniformly and independently at random
  on a $m$-dimensional compact manifold $\Mcal$ embedded in
  $\Rbb^n$, then, for any smooth function $f:V\to \Rbb$ and for a
  weighting function $w(u,v)=\exp\{ -\norm{u-v}^2/{2\epsilon}\}$, the
  normalized graph Laplacian defined by the graph $G=(V,E=V\times
  V,w)$ satisfies
$$
\tinv{\epsilon}\,\lim_{N\to\infty}\Delta f(u)\ =\
\tinv{2}\,\Delta_{\Mcal} f(u)\ +\ O(\epsilon^{1/2}), 
$$
with $\Delta_{\Mcal}$ the Laplace-Beltrami operator defined on
$\Mcal$.
\label{lem:discr_lapl_manifold_lapl}
\end{lemma}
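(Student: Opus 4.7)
The plan is to argue in two stages: first pass $N\to\infty$ at fixed $\epsilon$ via the strong law of large numbers, replacing the graph operator by a continuous integral operator $L_\epsilon$ on $\Mcal$, and then expand $L_\epsilon$ asymptotically as $\epsilon\to 0$ by a Laplace-type (Gaussian) expansion on the manifold. For the first pass, since the $v_i$ are i.i.d.\ uniform on $\Mcal$, the strong LLN applied to $h(v)=w(u,v)$ gives $d(u)/N\to p_\epsilon(u)/\vol(\Mcal)$ with $p_\epsilon(u):=\int_\Mcal w(u,y)\,\ud\vol(y)$, and applied to $h(v)=w(u,v)f(v)/\sqrt{p_\epsilon(v)}$ it gives the other sum; the $N$'s in numerator and denominator cancel, leaving
$$\lim_{N\to\infty}\Delta f(u)\ =\ L_\epsilon f(u)\ :=\ \inv{\sqrt{p_\epsilon(u)}}\int_\Mcal \frac{w(u,y)\,f(y)}{\sqrt{p_\epsilon(y)}}\,\ud\vol(y)\,-\,f(u).$$
Compactness of $\Mcal$ and continuity of the integrand make this convergence uniform in $u$.

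The crucial ingredient of the second stage is the standard manifold--Gaussian expansion: for any smooth $g:\Mcal\to\Rbb$,
$$\int_\Mcal w(u,y)\,g(y)\,\ud\vol(y)\ =\ (2\pi\epsilon)^{m/2}\bigl[g(u)+\tinv{2}\epsilon\,\Delta_\Mcal g(u)+\tinv{2}\epsilon\,\omega(u)\,g(u)+O(\epsilon^2)\bigr],$$
where $\omega(u)$ is a purely geometric scalar (combining scalar curvature of $\Mcal$ and the squared norm of the second fundamental form of the embedding), not depending on $g$. Taking $g\equiv 1$ yields $p_\epsilon(u)=(2\pi\epsilon)^{m/2}(1+\tinv{2}\epsilon\,\omega(u)+O(\epsilon^2))$, whence $p_\epsilon^{-1/2}(u)=(2\pi\epsilon)^{-m/4}(1-\tinv{4}\epsilon\,\omega(u)+O(\epsilon^2))$.

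Substituting the $p_\epsilon^{-1/2}(y)$ expansion inside the integral in $L_\epsilon f(u)$, applying the kernel formula to the resulting smooth function, and multiplying by the outer factor $p_\epsilon^{-1/2}(u)$, the $(2\pi\epsilon)^{\pm m/4}$ powers cancel, while the three occurrences of $\omega$ contribute $-\tinv{4}\omega(u)-\tinv{4}\omega(u)+\tinv{2}\omega(u)=0$. Only the Laplace--Beltrami term survives at order $\epsilon$, giving $L_\epsilon f(u)=\tinv{2}\epsilon\,\Delta_\Mcal f(u)+O(\epsilon^2)$; dividing by $\epsilon$ yields the claim (with an $O(\epsilon)$ remainder, in fact stronger than the stated $O(\epsilon^{1/2})$). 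The hard part of the argument will be proving the kernel expansion rigorously: one has to expand in normal coordinates $\xi\in\Rbb^m$ around $u$ three different objects simultaneously---the ambient squared distance $\norm{u-\exp_u(\xi)}^2=\norm{\xi}^2+O(\norm{\xi}^4)$ whose fourth-order second fundamental form correction, once inside the exponential, produces part of $\omega$ after integrating $\norm{\xi}^4$ against the Gaussian; the volume form $\ud\vol=(1+O(\norm{\xi}^2))\,\ud\xi$, producing the Ricci-curvature part of $\omega$; and $g$ itself, whose Hessian trace produces $\Delta_\Mcal g$. Keeping a careful ledger of which fourth-order terms in $\xi$ actually survive Gaussian integration at order $\epsilon$ is the main calculation, and the cancellation of $\omega$ in the final assembly explains why the symmetric $1/2$-normalization is privileged for recovering the intrinsic Laplacian without a geometric bias term.
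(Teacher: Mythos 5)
Your proposal is correct and follows essentially the same route as the paper: a law-of-large-numbers passage from the graph sums to the integral operator, the Gaussian kernel expansion on the manifold (which the paper imports from Singer/Smolyanov with remainder $O(\epsilon^{3/2})$, where you claim $O(\epsilon^2)$ --- either suffices for the stated $O(\epsilon^{1/2})$), and the cancellation of the geometric zeroth-order term produced by the symmetric $D^{-1/2}WD^{-1/2}$ normalization. The only organizational difference is that you take the limits in sequence and expand $p_\epsilon^{-1/2}$ perturbatively so the curvature coefficients sum to zero, whereas the paper keeps the sampling error $O(1/(N\epsilon^{m/2}))$ alongside $O(\epsilon^{3/2})$ throughout and cancels the curvature exactly via the auxiliary function $g=(1+\tfrac{\epsilon}{2}E)^{-1/2}$.
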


\begin{proof}
  We follow a similar development to the one given in \cite{singer2006gml}. With
  the hypothesis of the Lemma, for any function $h\in \ell^2(V)$, we
  have
\begin{align*}
  [Wh](u)\ =\ \sum_j w(u,v_j)\,h(v_j)&=\sum_{j\neq i} \exp\{
  -\norm{u-v_j}^2/{2\epsilon}\}\,h(v_j)\ +\ h(u).
\end{align*}
Since $Y_j=w(u,v_j)\,h(v_j)$ are iid for $j\neq i$, by the
law of large numbers we have
\begin{align*}
  \sum_{j\neq i} w(u,v_j)\,h(v_j)&\simeq\ (N-1)\,
  \mathbb{E}_\Mcal\big[\exp\{u-\cdot\}h(\cdot)\big]\\
&=\ \tfrac{N-1}{\vol\Mcal}\,\int_\Mcal \exp\{
  -\norm{u-y}^2/{2\epsilon}\}\,h(y)\ \ud_\Mcal y,
\end{align*}
where the integral is performed on the manifold with the local
infinitesimal volume element $\ud_\Mcal y$. The relation (2.9) in
\cite{singer2006gml} (or Eq. (10) in \cite{smolyanov2000bmm}) explains
that
$$
\tfrac{1}{(2\pi\epsilon)^{m/2}}\,\int_\Mcal \exp\{
  -\norm{u-y}^2/{2\epsilon}\}\,h(y)\ \ud_\Mcal y\ =\ h(u) +
  \tfrac{\epsilon}{2} [E(u)h(u) + \Delta_\Mcal h(u)] + O(\epsilon^{3/2}),
$$    
where $E(u)=\inv{3}S(u)$ and $S$ is the scalar curvature of $\Mcal$
\cite{carmo1992rg}. Therefore,
\begin{align}
  [Wh](u)&=\ \tfrac{(N-1)(2\pi\epsilon)^{m/2}}{\vol\Mcal}\big[h(u)
  + \tfrac{\epsilon}{2} [E(u)h(u) + \Delta_\Mcal h(u)] +
  O(\epsilon^{3/2}) \big] + h(u)\nonumber\\
 &=\ \tfrac{(N-1)(2\pi\epsilon)^{m/2}}{\vol\Mcal}\big[h(u)
  + \tfrac{\epsilon}{2} [E(u)h(u) + \Delta_\Mcal h(u)] +
  O\big(\epsilon^{3/2}, 1/(N\epsilon^{m/2})\big)\big],
\label{eq:Wh-tayl-dev}
\end{align}
where the notation $O(\mu,\nu)$ means $|O(\mu,\nu)| < C\mu + D\nu$ for
two positive values $\mu,\nu \ll 1$. Taking $h=1$, we get then
\begin{equation}
\label{eq:degree-dev}
d(v_j)  = \tfrac{(N-1)(2\pi\epsilon)^{m/2}}{\vol\Mcal}\big[1
  + \tfrac{\epsilon}{2}\,E(v_j) + O\big(\epsilon^{3/2}, 1/(N\epsilon^{m/2})\big)\big].
\end{equation}
Therefore, 
\begin{align*}
\Delta f(u)&=\ \sum_j \tfrac{w(u,v_j)}{\sqrt{d(u) d(v_j)}} f(v_j)\ -\
f(u)\\
&=\ \tfrac{1}{\sqrt{1 +
\frac{\epsilon}{2}E(u)}}\ \tfrac{\vol\Mcal}{(N-1)(2\pi\epsilon)^{m/2}} \sum_{j\neq i}
w(u,v_j)\,\tfrac{f(v_j)}{\sqrt{1 + \frac{\epsilon}{2}E(v_j)}}\ -\
f(u)\ +\ O\big(\epsilon^{3/2}, 1/(N\epsilon^{m/2})\big). 
\end{align*}
Taking now $h(t)=f(t)g(t)$ for $t\in\Mcal$ with
$g(t)=\tfrac{1}{\sqrt{1 + \frac{\epsilon}{2}E(t)}}$ in
\eqref{eq:Wh-tayl-dev}, we get
\begin{multline*}
\tfrac{\vol\Mcal}{(N-1)(2\pi\epsilon)^{m/2}}\ \sum_{j\neq i}
w(u,v_j)\,f(v_j)g(v_j)\\ 
=\ \big[ g(u)f(u)(1
  + \tfrac{\epsilon}{2} E(u)) + \tfrac{\epsilon}{2}\Delta_\Mcal h(u) +
  O\big(\epsilon^{3/2}, 1/(N\epsilon^{m/2})\big)\big].
\end{multline*}
However, 
$$
\Delta_\Mcal h(u) = f(u)\Delta_\Mcal g(u) + 2\,\scp{\nabla_\Mcal f(u)}{\nabla_\Mcal g(u)}_{T_u\Mcal} +
g(u) \Delta_\Mcal f(u),
$$
where the scalar product $\scp{\cdot}{\cdot}_{T_u\Mcal}$ occurs in the
tangent plane $T_u\Mcal\simeq \Rbb^d$ of $\Mcal$ on $u$ with the
metric of the manifold \cite{carmo1992rg}. For any function $s$ on
$\Mcal$, the gradient $\nabla_\Mcal s(u) \in T_u\Mcal$ is composed of
the derivative of $s$ according to a local system of coordinates (or
\emph{chart}) isomorphic to $\Rbb^d$.

From the definition of $g$, it is clear that any differential operator
$D_\Mcal$ of $g$ with respect to this local system in $T_u\Mcal$ is of
order $D_\Mcal g(u) = O(\epsilon)$. Consequently,
$$
\tfrac{\epsilon}{2}\Delta_\Mcal h(u) = \tfrac{\epsilon}{2}g(u) \Delta_\Mcal f(u) + O(\epsilon^2),
$$
that provides the final result,   
\begin{align*}
\Delta f(u)&=\ g^2(u) f(u)\,\big(1 +
\tfrac{\epsilon}{2}E(u)\big)\ +\
\tfrac{\epsilon}{2}\Delta_\Mcal f(u)\ -\ f(u)\ +\ O\big(\epsilon^{3/2}, 1/(N\epsilon^{m/2})\big)\\
&=\ \tfrac{\epsilon}{2}\,\Delta_\Mcal f(u)\ +\ O\big(\epsilon^{3/2}, 1/(N\epsilon^{m/2})\big). 
\end{align*}

\end{proof}


\end{document}